\newtheorem{theorem}{Theorem}[section]
\newtheorem{proposition}{Proposition}[section]
\newtheorem{lemma}[proposition]{Lemma}
\theoremstyle{definition}
\numberwithin{equation}{section}
\newcommand{\R}{{\mathbb R}}
\newcommand{\Z}{{\mathbb Z}}
\newcommand{\I}{{\mathbb I}}
\title{On Coron's problem for the $p$-Laplacian}
\author{Carlo Mercuri}
\author{Berardino Sciunzi}
\author{Marco Squassina}
\thanks{The second and third authors were partially  supported by the MIUR project:
   ``Variational and Topological Methods in the Study of Nonlinear Phenomena''.}
\subjclass[2000]{35J92, 58E05, 54D30}
\keywords{Coron's problem, quasi-linear equations, critical exponent}
\begin{document}

\begin{abstract}
We prove that the critical problem for the $p$-Laplacian operator admits
a nontrivial solution in annular shaped domains with
sufficiently small inner hole. This extends
Coron's problem to a class of quasilinear problems.
\end{abstract}

\maketitle

\section{Introduction}
\noindent
We want to extend the classical result of Coron \cite{coron}. Consider the problem
\begin{equation}\label{maineq}
\left\{
\begin{array}{lll}
-\Delta_p u = |u|^{p^*-2}u  \quad &\mathrm{in}& \,\,\Omega   \\
  \,\,\, u=0 &\mathrm{on}& \,\, \partial \Omega,
\end{array}
\right.
\end{equation}
where  $\Omega$ is a smooth bounded domain in $\R^N,$ $1<p<N,$ $p^*:=N p/(N-p)$ is the critical Sobolev exponent, $\Delta_p u:={\rm div}(|\nabla u|^{p-2}\nabla u)$ is the $p$-Laplace operator.
 Solutions on the whole space will be considered in  $$\mathcal D^{1,p}(\mathbb R^N):=\{u\in L^{p^*}(\R^N):\nabla u\in L^p(\mathbb R^N;\mathbb R^N)\}$$
endowed with the norm $$\|u\| :=\|\nabla u\|_{L^p(\mathbb R^N)}.$$
We denote by $ W_0^{1,p}(\Omega)$ the closure of $C^\infty_c(\Omega)$ in $\mathcal D^{1,p}(\R^N)$ and define on $W_0^{1,p}(\Omega)$ the functional
\begin{equation*}
J(u):=\frac{1}{p}\int_{\Omega}|\nabla u|^p dx-\frac{1}{p^*} \int_{\Omega}|u|^{p^*}dx.
\end{equation*}
As it is well-known in tackling problem (\ref{maineq}) with variational techniques, the main difficulty is due to the fact that the embedding $ W_0^{1,p}(\Omega)\subset L^{p^*}(\Omega)$ is not compact. 
We refer to \cite{struwe} for a sample of the extensive literature on semi-linear problems involving the critical Sobolev exponent, largely inspired by the pioneering paper of Brezis and Nirenberg \cite{breni}. 
We also define 
\begin{equation*}
S:=\inf\Big\{\int_{\R^N}|\nabla u|^p dx,\, u\in {\mathcal D}^{1,p}(\R^N)\, : \, \int_{\R^N}|u|^{p^*} dx =1\Big\}
\end{equation*}
the best Sobolev constant, attained by nowhere zero functions in $\R^N,$ see e.g. \cite{talenti}.
Equivalently
\begin{equation}
\label{SSS}
S\,=\,\inf_{\underset{u\neq 0}{u\in{\mathcal D}^{1,p}(\mathbb{R}^N)} }\dfrac{\int_{\mathbb{R}^N}|\nabla\,u|^pdx}{\left(\int_{\mathbb{R}^N}\,|u|^{p^*}dx\right)^{\frac{p}{p^*}}},
\end{equation}
where by a simple scaling argument the infimum remains unchanged if taken
on competing functions supported in an arbitrary subdomain of $\R^N$.
In light of the Pohozaev identity obtained by Guedda and Veron \cite[Corollary 3.1]{PohoGV}, 
we know that problem \eqref{maineq} does not admit positive solutions on a strictly star-shaped domain.
\vskip4pt
\noindent
The main result of the paper is the following
\begin{theorem}\label{coronn}
Let $2N/(N+2)<p\leq 2,$ $x_0\in \R^N$ and radii $R_2>R_1>0$ such that
\begin{equation}
\label{domain-ass}
\{R_1\leq |x-x_0|\leq R_2\}\subset \Omega,\qquad
\{|x-x_0|\leq R_1\}\not\subset \overline\Omega.
\end{equation}
Then problem \eqref{maineq} admits a positive solution
for $R_2/R_1$  sufficiently large.
\end{theorem}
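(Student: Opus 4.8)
The plan is to adapt Coron's original variational argument, which works by showing that the energy functional $J$ admits a critical value strictly below the first energy level at which compactness fails. Since we have the Sobolev best constant $S$, I would recall that for the $p$-Laplacian the relevant threshold is $\tfrac{1}{N}S^{N/p}$: by a concentration-compactness analysis (in the spirit of Lions and the work of Struwe on critical problems), Palais--Smale sequences for $J$ at a level $c$ with $0<c<\tfrac{1}{N}S^{N/p}$ converge strongly, so a mountain-pass or min-max value lying in this window produces a genuine nontrivial solution. The first step, therefore, is to establish this compactness threshold for $J$ on $W_0^{1,p}(\Omega)$, i.e. a local Palais--Smale condition below $\tfrac{1}{N}S^{N/p}$.

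\smallskip
\noindent
The heart of the matter is constructing a minimax class whose value falls strictly inside that window. Following Coron, I would exploit the topology of the annular domain: because $\Omega$ contains a spherical shell and has a hole (the second condition in \eqref{domain-ass}), the natural device is a map from a cone over $S^{N-1}$ (or equivalently a family parametrized by concentration points and scales) into $W_0^{1,p}(\Omega)$, built from truncated and rescaled Talenti instantons $U_{\varepsilon,y}$ that nearly attain $S$. One endpoint of the family concentrates the instanton at a point, giving energy close to $\tfrac{1}{N}S^{N/p}$, while the opposite endpoint is the trivial (or low-energy) configuration. The key quantitative estimate is that, by exploiting the freedom in the large ratio $R_2/R_1$, one can spread the supports of these test functions inside the thick shell $\{R_1\le|x-x_0|\le R_2\}$ so that the maximum energy along the family stays \emph{strictly} below the threshold $\tfrac{1}{N}S^{N/p}$. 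A linking or degree argument, using the nontrivial topology induced by the hole, then guarantees that this minimax level is actually attained by a critical point and is positive.

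\smallskip
\noindent
I expect the restriction $2N/(N+2)<p\le 2$ to enter precisely in the energy expansions: for the quasilinear operator the asymptotic analysis of $\int|\nabla U_{\varepsilon,y}|^p$ and $\int U_{\varepsilon,y}^{p^*}$ as $\varepsilon\to0$ is far more delicate than in the semilinear case $p=2$, and the sign and size of the error terms that must remain below the threshold depend sensitively on $p$ relative to $2$ and on the dimensional condition $p>2N/(N+2)$. Finally, to obtain a \emph{positive} solution one replaces $J$ by its natural restriction to nonnegative functions (or uses $u_+$), and invokes the strong maximum principle for the $p$-Laplacian together with standard regularity.

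\smallskip
\noindent
The main obstacle I anticipate is twofold: first, the lack of an explicit Hilbert structure means the usual Brezis--Lieb splitting and the linearization underlying the energy estimates must be carried out carefully for $p\ne2$, with the instanton asymptotics being the technically demanding part; second, verifying that the minimax value genuinely lies below $\tfrac{1}{N}S^{N/p}$ requires sharp control of how the geometry of the shell (through $R_2/R_1$ large) suppresses the interaction and boundary-truncation errors, and this is where the bulk of the quantitative work will reside.
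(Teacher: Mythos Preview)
Your proposal has the right topological skeleton (truncated Talenti instantons parametrized by $\Sigma\times[0,1)$, linking via the hole), but the analytic core is set up on the wrong energy window, and this is a genuine gap.

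You aim for a minimax value strictly \emph{below} $\tfrac{1}{N}S^{N/p}$. That is impossible for the pure critical problem: testing \eqref{maineq} with the solution itself gives $\|\nabla u\|_p^p=\|u\|_{p^*}^{p^*}$, so every nontrivial critical point has $J(u)=\tfrac{1}{N}\|u\|_{p^*}^{p^*}\ge \tfrac{1}{N}S^{N/p}$, with strict inequality on any bounded $\Omega$ since $S$ is never attained there. Your truncated instantons, for the same reason, have energy approaching $\tfrac{1}{N}S^{N/p}$ from \emph{above} as the truncation becomes negligible; no choice of $R_2/R_1$ pushes the family below that threshold. The Palais--Smale condition below $\tfrac{1}{N}S^{N/p}$ is true but vacuous here.

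The correct mechanism, and what the paper does, is to work in the \emph{second} window: one proves that $J$ satisfies Palais--Smale on $(\tfrac{1}{N}S^{N/p},\,\tfrac{2}{N}S^{N/p})$. This requires a Struwe-type global compactness decomposition and then the exclusion of a single bubble at level $c>\tfrac{1}{N}S^{N/p}$. That exclusion is exactly where the hypothesis $2N/(N+2)<p\le 2$ enters: one needs the classification of positive $\mathcal D^{1,p}(\R^N)$-solutions of $-\Delta_p u=u^{p^*-1}$ as Talenti functions (via the radial symmetry result available only in this range), together with nonexistence on the half-space and an elementary bound showing sign-changing limits have energy $\ge \tfrac{2}{N}S^{N/p}$. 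The restriction on $p$ is \emph{not} driven by instanton energy expansions; those estimates (the analogue of your ``boundary-truncation errors'') are soft and hold for all $1<p<N$, their only role being to place $\sup_{\sigma,t} S(v^\sigma_t;\Omega)$ below $2^{p/N}S$ for $R_2/R_1$ large. Once compactness holds in the second window, the Coron deformation/barycenter argument on $\{\|u\|_{p^*}=1\}$ produces the contradiction with \eqref{domain-ass}. You should reorganize your plan around this window and relocate the role of the hypothesis on $p$ accordingly.
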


\noindent
Theorem \ref{coronn} is, mainly, a consequence of Lemma \ref{PS}, in which the compactness result \cite[Theorem 1.2]{MeWi} and the symmetry result of \cite{ClassDino} play a key role. There are several difficulties arising in the present quasilinear setting which are partially highlighted in Lemma \ref{PS}, which make the proof more delicate than for dealing with the semilinear case $p=2.$ One of those is the fact that the classification of all positive solutions of the critical problem in $\R^N$ is not yet available for all $p\in(1,N).$ We observe that an extension of Lemma \ref{PS} to a broader range of $p$ would immediately yield an extension of Theorem \ref{coronn}. We conjecture that the symmetry result of \cite{ClassDino} and hence Lemma \ref{PS} and Theorem \ref{coronn} hold for all values of $p\in (1,N).$ Another open problem, arising in the proof of Lemma \ref{PS}, is the nonexistence of sign-changing solutions of the critical problem in the half-space for $p\neq 2$. Such a limiting problem arises because of the boundary of $\Omega.$ We show that in fact only the nonexistence result of the positive solutions of the critical problem in the half-space \cite[Theorem 1.1]{MeWi} is needed.
In the case $N=2$ Theorem \ref{coronn} holds for all $1<p<2,$ which is the desired range for a $p$-Laplacian extension of the classical result of Coron.  We point out that Theorem \ref{coronn} extends \cite[ Theorem 1.1]{MePa}, where problem (\ref{maineq}) had been studied assuming that $\Omega$ is invariant under the action of a closed subgroup of $O(N)$.
It is an open problem whether \eqref{maineq} has nontrivial solutions when a $\Z_2$-homology group of $\Omega$ is nontrivial. This is the case
for $p=2$, see the celebrated analysis done in \cite{bahricoron}. In several contributions dealing with the semi-linear case $p=2$, see e.g.
\cite{dancer,ding,passaseo}, it is shown that the existence of a nontrivial solution
is possible also in contractible domains, hence conditions on the homology of $\Omega$ are not necessary for problem \ref{maineq} to have solutions. A very well-known and challenging problem, even in the case $p=2,$ would be to exploit the combined effect of both the topology and the geometry of $\Omega$ in order to characterize the existence of a positive solution to problem \eqref{maineq}.


\section{Proof of Theorem~\ref{coronn}}

\noindent
In this section we prove Theorem~\ref{coronn}.

\subsection{Palais-Smale condition}
\noindent
We define $\R^N_+:=\{x\in\R^N:x_N>0\}$ and denote by $\mathcal D^{1,p}_0(\R^N_+)$ the closure of $C^\infty_c(\R^N_+)$ in $\mathcal D^{1,p}(\R^N)$ after extending by zero on $\R^N \setminus \R^N_+. $
\begin{lemma}\label{sign}
Let $u\in W^{1,p}_0(\Omega)$ be a sign-changing solution to \eqref{maineq}. Then $J(u)\geq 2S^{N/p}/N$. Moreover, the same conclusion
holds for the sign-changing solutions of $-\Delta_p u = |u|^{p^*-2}u$ in $\mathcal D^{1,p}(\R^N)$ or in $\mathcal D^{1,p}_0(\R^N_+).$
\end{lemma}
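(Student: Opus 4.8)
The plan is to exploit the standard energy-splitting for sign-changing solutions together with the Sobolev inequality applied separately to the positive and negative parts. First I would set $u^+ := \max\{u,0\}$ and $u^- := \max\{-u,0\}$, both nonzero by assumption, and test the equation $-\Delta_p u = |u|^{p^*-2}u$ against $u^+$ and against $u^-$ respectively. For $p\le 2$ one has the pointwise inequality $|\nabla u|^{p-2}\nabla u\cdot\nabla u^{\pm}\ge |\nabla u^{\pm}|^p$ (indeed an equality on $\{u>0\}$, resp.\ $\{u<0\}$, once one checks that $\nabla u^{\pm}$ vanishes a.e.\ where $u^{\pm}=0$); integrating gives $\int_\Omega|\nabla u^{\pm}|^p\dif x \le \int_\Omega |u^{\pm}|^{p^*}\dif x$. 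Combining this with the Sobolev inequality $S\bigl(\int_\Omega |u^{\pm}|^{p^*}\bigr)^{p/p^*}\le \int_\Omega|\nabla u^{\pm}|^p$ yields $\int_\Omega|\nabla u^{\pm}|^p\dif x \ge S^{N/p}$ and likewise $\int_\Omega|u^{\pm}|^{p^*}\dif x\ge S^{N/p}$.

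Next I would compute $J(u)$ using the equation itself: testing against $u$ gives $\int_\Omega|\nabla u|^p\dif x = \int_\Omega |u|^{p^*}\dif x$, hence $J(u) = \bigl(\tfrac1p-\tfrac1{p^*}\bigr)\int_\Omega|u|^{p^*}\dif x = \tfrac1N\int_\Omega|u|^{p^*}\dif x$. Since $|u|^{p^*} = (u^+)^{p^*} + (u^-)^{p^*}$ pointwise, the lower bounds above give $J(u)\ge \tfrac1N\bigl(S^{N/p}+S^{N/p}\bigr) = 2S^{N/p}/N$, which is the claim. The argument is purely local in that it only uses the Sobolev inequality on the supports of $u^\pm$ and the weak formulation of the equation; consequently it transfers verbatim to solutions in $\mathcal D^{1,p}(\R^N)$ (using the Sobolev inequality \eqref{SSS} on all of $\R^N$) and to solutions in $\mathcal D^{1,p}_0(\R^N_+)$ (using that the infimum in \eqref{SSS} is unchanged when restricted to functions supported in a subdomain, in particular a half-space).

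The main technical point, rather than a deep obstacle, is justifying that $u^+$ and $u^-$ are legitimate test functions and that the chain-rule/truncation identities hold: one needs $u\in W^{1,p}_0(\Omega)\cap L^{p^*}(\Omega)$ so that $u^\pm\in W^{1,p}_0(\Omega)$ with $\nabla u^\pm = \nabla u\,\chi_{\{\pm u>0\}}$ a.e., and one needs the right-hand side $|u|^{p^*-2}u$ to pair with $u^\pm$ in $L^1$, which follows from $u\in L^{p^*}$. The restriction $p\le 2$ enters only through the elementary vector inequality $|\xi|^{p-2}\xi\cdot\eta \ge |\eta|^p - C|\eta|^{p-1}|\xi-\eta|$-type estimate, but in fact for the truncations $\eta = \nabla u^\pm$ and $\xi = \nabla u$ one simply has $\xi = \eta$ on the relevant set, so equality holds and no restriction on $p$ is genuinely used in this lemma; I would nonetheless keep the hypothesis as stated to match the standing assumptions of the paper. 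I expect the half-space case to require a sentence recalling that the zero extension across $\partial\R^N_+$ keeps $u^\pm$ in $\mathcal D^{1,p}(\R^N)$, after which the scaling remark following \eqref{SSS} closes the estimate.
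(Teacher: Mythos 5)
Your proposal is correct and follows essentially the same route as the paper: test the equation with $u^{\pm}$ to get $\int|\nabla u^{\pm}|^p\,dx=\int|u^{\pm}|^{p^*}dx$, apply the Sobolev inequality to each truncation to obtain $\int|u^{\pm}|^{p^*}dx\geq S^{N/p}$, and conclude via $J(u)=\tfrac1N\int|u|^{p^*}dx$. Your added remarks on the test-function justification and the half-space extension are sound, and you correctly observe that the restriction $p\le 2$ plays no role here since the relevant pointwise relation is an equality on $\{\pm u>0\}$.
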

\begin{proof}
If $u\in W^{1,p}_0(\Omega)$ is a sign-changing solution to \eqref{maineq}, then $u^\pm\in  W^{1,p}_0(\Omega)\setminus\{0\}$ and
by testing  \eqref{maineq} with $u^\pm$ yields
$$
\int_{\Omega} |\nabla u^+|^pdx=\int_{\Omega} |u^+|^{p^*}dx,\qquad
\int_{\Omega} |\nabla u^-|^pdx=\int_{\Omega} |u^-|^{p^*}dx.
$$
In turn, using the definition of \eqref{SSS}, we obtain
\begin{equation*}
J(u)=J(u^+)+J(u^-)=\frac{1}{N}\|u^+\|_{p^*}^{p^*}+\frac{1}{N}\|u^-\|_{p^*}^{p^*}\geq  2S^{N/p}/N,
\end{equation*}
concluding the proof. The same argument works for the problem on $\R^N$ and on $\R^N_+$.
\end{proof}

\begin{lemma}\label{PS}
Assume that $2N/(N+2)<p\leq 2.$ Then
$J$ satisfies the Palais-Smale condition
for all $c\in (S^{N/p}/N,2S^{N/p}/N).$
\end{lemma}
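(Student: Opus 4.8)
The plan is to argue by the standard concentration-compactness dichotomy adapted to the $p$-Laplacian, using the quantitative threshold provided by Lemma~\ref{sign} to rule out the ``bad'' scenarios. Let $(u_n)\subset W^{1,p}_0(\Omega)$ be a Palais-Smale sequence at level $c\in(S^{N/p}/N,2S^{N/p}/N)$, i.e. $J(u_n)\to c$ and $J'(u_n)\to 0$ in $W^{-1,p'}(\Omega)$. The first step is the routine boundedness of $(u_n)$ in $W^{1,p}_0(\Omega)$: combining $J(u_n)-\frac1{p^*}\langle J'(u_n),u_n\rangle = \frac1N\|\nabla u_n\|_p^p$ with the estimates on $J(u_n)$ and $J'(u_n)$ gives $\|\nabla u_n\|_p^p\le C$. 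Passing to a subsequence we get $u_n\rightharpoonup u$ weakly in $W^{1,p}_0(\Omega)$, $u_n\to u$ strongly in $L^p_{\mathrm{loc}}$ and a.e., and $u$ is a (possibly trivial) solution of \eqref{maineq}. Setting $v_n:=u_n-u$, one has the Brezis-Lieb-type splitting $\|\nabla u_n\|_p^p=\|\nabla u\|_p^p+\|\nabla v_n\|_p^p+o(1)$ and $\|u_n\|_{p^*}^{p^*}=\|u\|_{p^*}^{p^*}+\|v_n\|_{p^*}^{p^*}+o(1)$; here one must invoke the fact that $\nabla u_n\to\nabla u$ a.e., which for the $p$-Laplacian is not automatic from weak convergence — this is exactly where the compactness result \cite[Theorem 1.2]{MeWi} enters, guaranteeing a.e. convergence of the gradients so that the Brezis-Lieb lemma applies to the gradient term as well.

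Once the splitting is in hand, the goal is to show $\|\nabla v_n\|_p\to 0$, which upgrades weak to strong convergence and finishes the proof. From $\langle J'(u_n),u_n\rangle=o(1)$ and the splitting one deduces $\|\nabla v_n\|_p^p=\|v_n\|_{p^*}^{p^*}+o(1)=:\ell+o(1)$, and hence $J(v_n)\to \frac1N\ell$ while $J(u_n)=J(u)+J(v_n)+o(1)$ gives $c=J(u)+\frac1N\ell$. By the Sobolev inequality \eqref{SSS}, $\|\nabla v_n\|_p^p\ge S\|v_n\|_{p^*}^p$, so passing to the limit either $\ell=0$ (done) or $\ell\ge S^{N/p}$. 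Suppose for contradiction $\ell\ge S^{N/p}$. The second step is then to analyze $u$: since $u$ solves \eqref{maineq}, if $u\not\equiv0$ then $J(u)\ge S^{N/p}/N$ when $u$ has constant sign (the ground-state energy level, again from \eqref{SSS}), or $J(u)\ge 2S^{N/p}/N$ by Lemma~\ref{sign} if $u$ changes sign. In the sign-changing case we immediately get $c=J(u)+\frac1N\ell\ge 2S^{N/p}/N+S^{N/p}/N>2S^{N/p}/N$, contradicting $c<2S^{N/p}/N$. In the constant-sign nontrivial case $c\ge S^{N/p}/N+S^{N/p}/N=2S^{N/p}/N$, again a contradiction. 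Finally if $u\equiv0$ then $c=\frac1N\ell\ge S^{N/p}/N$, but we must exclude equality-type escape at level exactly $S^{N/p}/N$ and, more seriously, the case $\ell$ being an integer multiple $\ge 2$ of $S^{N/p}$; the latter would force $c\ge 2S^{N/p}/N$, again excluded.

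The main obstacle, and the delicate point this lemma is really about, is the case $u\equiv 0$ with exactly one ``bubble'' escaping, i.e. $\ell=S^{N/p}$: here the profile decomposition (in the spirit of \cite[Theorem 1.2]{MeWi}) says $v_n$ looks like a single rescaled solution of the limiting critical equation, either on all of $\R^N$ or — if the concentration point approaches $\partial\Omega$ — on a half-space $\R^N_+$. One must show this single bubble cannot actually carry the energy, i.e. that there is no nonzero solution of $-\Delta_p w=|w|^{p^*-2}w$ in $\mathcal D^{1,p}_0(\R^N_+)$ at energy level $S^{N/p}/N$: for positive solutions this is precisely \cite[Theorem 1.1]{MeWi} (nonexistence of positive solutions on the half-space), and for sign-changing solutions Lemma~\ref{sign} pushes the energy up to $2S^{N/p}/N$, so a single bubble at level $S^{N/p}/N$ on the half-space is impossible. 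If instead the bubble lives on the whole $\R^N$, the symmetry/classification result of \cite{ClassDino} — valid in the range $2N/(N+2)<p\le2$ — identifies it with the Talenti instanton, whose presence inside $v_n$ forces $\|v_n\|_{p^*}^{p^*}\to S^{N/p}$ but then, because $u\equiv0$, would give $u_n$ itself concentrating, hence $J(u_n)\to S^{N/p}/N=c$, contradicting the strict lower bound $c>S^{N/p}/N$. Thus every scenario with $\ell>0$ is excluded, forcing $\ell=0$ and $u_n\to u$ strongly in $W^{1,p}_0(\Omega)$, which is the Palais-Smale condition. The restriction on $p$ is inherited entirely from the availability of the classification in \cite{ClassDino} and the compactness statement in \cite{MeWi}.
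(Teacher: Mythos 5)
Your proof follows essentially the same route as the paper's: the global compactness/profile decomposition of \cite{MeWi}, energy quantization together with Lemma~\ref{sign} to bound the number of bubbles, the nonexistence of positive solutions on the half-space from \cite{MeWi}, and the symmetry and classification results of \cite{ClassDino} and \cite{ClassGV} identifying the whole-space bubble as a Talenti function of energy exactly $S^{N/p}/N$, contradicting $c>S^{N/p}/N$. Two points should be tightened. First, the single-bubble case is not a priori the case $\ell=S^{N/p}$: a single bubble could in principle carry any energy in $(S^{N/p}/N,\,2S^{N/p}/N)$, and the fact that its energy equals $S^{N/p}/N$ is the \emph{conclusion} of the classification, not a hypothesis; your argument in fact handles the general single-bubble case, so only the framing is off. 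Second, before invoking the symmetry result of \cite{ClassDino} for the whole-space bubble you must first rule out that this bubble changes sign, since that result concerns positive solutions; this follows from Lemma~\ref{sign} (which covers solutions in $\mathcal D^{1,p}(\R^N)$ as well) exactly as in your half-space discussion, but you only state it there. With these repairs the argument coincides with the paper's.
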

\begin{proof}
Assume that for some $c\in(S^{N/p}/N,2S^{N/p}/N),$ $(u_n)\in W^{1,p}_0(\Omega)$ is such that
$J(u_n)\to c$, and $J'(u_n)\to 0$ in $W^{-1,p'}(\Omega).$
We define on
$\mathcal D^{1,p}(\R^N)$
$$
J_\infty(u):=\int_{\R^N}\frac{|\nabla u|^{p}}{p}dx-\int_{\R^N}\frac{|u|^{p^*}}{p^*}dx.
$$
On $\mathcal D^{1,p}_0(\R^N_+)$ we define the same functional $J_\infty$ extending by zero on $\R^N \setminus \R^N_+. $ \newline
By applying \cite[proof of Theorem 1.2]{MeWi}, which extends \cite{struwe-art}, passing if necessary to a subsequence,
we can infer that there exists a (possibly trivial)
solution $v_0\in W^{1,p}_{0}(\Omega)$ of

$$-\Delta_p u =  |u|^{p^*-2}u \quad \textrm{in}\quad \Omega, $$
$k\in \mathbb N \cup \{0\}$, nontrivial solutions $\{v_1,...,v_k\}$ of
$$
-\Delta_p u =  |u|^{p^*-2}u \quad \textrm{in}\quad H_i,\quad i\in\{0,1,...k\},$$
where $H_i$ is either $\R^N$ or (up to rotation and translation) $\R^N_+,$ with either $v_i \in \mathcal D^{1,p}(\R^N)$ or (respectively)  $v_i \in \mathcal D^{1,p}_0(\R^N_+),$  and  there exist $k$ sequences $\{y^i_n\}_n \subset \bar{\Omega}$ and $\{\lambda^i_n\}_n \subset \R_+,$  satisfying
$$
\frac{1}{\lambda^i_n}\, \textrm{dist} \, (y^i_n,\partial\Omega)\rightarrow \infty , \,\quad n\rightarrow \infty,
$$
if $H_i\equiv\R^N$
or
$$
\frac{1}{\lambda^i_n}\, \textrm{dist} \, (y^i_n,\partial\Omega)< \infty , \,\quad n\rightarrow \infty,
$$
if (up to rotation and translation) $H_i\equiv\R^N_+,$ and
$$
\|u_n-v_0-\sum^k_{i=1}(\lambda^i_n)^{(p-N)/p}v_i ((\cdot-y^i_n)/\lambda^i_n)\|\rightarrow 0, \quad n\rightarrow \infty,
$$
$$
\|u_n\|^p\rightarrow \sum^k_{i=0}\|v_i\|^p, \quad n\rightarrow \infty,
$$
\begin{equation}
\label{levels}
J(v_0)+\sum^k_{i=1}J_\infty (v_i)=c.
\end{equation}
The restriction on the levels $c$ and Lemma \ref{sign} immediately yields the bound $k\leq 1$. If $k=0$ compactness holds and we are done.
If instead $k=1$, we have two cases, namely $v_0\equiv 0$ or $v_0\not\equiv 0$. If $v_0\not\equiv 0$, since
$$
J(v_0)\geq S^{N/p}/N,\qquad J_\infty(v_1)\geq S^{N/p}/N,
$$
(actually $J(v_0)> S^{N/p}/N,$ as the Sobolev constant is never achieved on bounded domains) we obtain a contradiction by combining \eqref{levels} with the assumption $c<2S^{N/p}/N$. If, instead, $v_0\equiv 0$, then
formula \eqref{levels} reduces to $J(v_1)=c$. Again by Lemma \ref{sign} $v_1$ does not change sign and by the nonexistence result \cite[Theorem 1.1]{MeWi} $H_1\equiv\R^N,$ namely $v_1\in \mathcal D^{1,p}(\R^N)$
solves
\begin{eqnarray}
-\Delta_p u = u^{p^*-1} &\textrm{in}& \R^N, \\
u > 0 & \textrm{in} & \R^N. \nonumber
\end{eqnarray}
Now, by the symmetry result of \cite[Theorem 2.1]{ClassDino}, which holds in the range $2N/(N+2)<p\leq 2$, $v_1$
is radially symmetric about some point and, in turn, by using  \cite[Theorem 2.1(ii)]{ClassGV} (see also \cite{bidaut}), after translation in the origin,
for a suitable value of $a>0$ $v_1$ is a Talenti function
$$
v_1(x)= \Big(Na\Big(\frac{N-p}{p-1}\Big)^{p-1}\Big)^{(N-p)/p^2}(a+|x|^{p/(p-1)})^{(p-N)/p},
$$
whose associated energy is $c=J_\infty(v_1)=S^{N/p}/N$ \cite{talenti}, since $v_1$ achieves the best Sobolev constant $S$. This
is a contradiction again, since $c>S^{N/p}/N.$
 This concludes the proof.
\end{proof}

\subsection{Proof of Theorem \ref{coronn} concluded.}
\noindent
Let $R_1,R_2$ be the radii of the annulus as in the statement of Theorem \ref{coronn}. As observed in \cite{coron,struwe}, without loss of generality, we may assume that $x_0=0$, $R_1=1/(4R)$ and $R_2=4R$ where $R>0$ will be chosen sufficiently large. Let us set $\Sigma:=\{x\in\R^N:|x|=1\}$ and consider the family of functions
$$
u^\sigma_t(x):=\left[\frac{1-t}{(1-t)^p+|x-t\sigma|^{\frac{p}{p-1}}}\right]^{\frac{N-p}{p}}\in {\mathcal D}^{1,p}(\R^N),
\,\,\quad \text{for $\sigma\in\Sigma$ and $t\in [0,1)$}.
$$
Moreover, let us now consider a function $\varphi\in C^\infty_c(\Omega)$ be such that $0\leq\varphi\leq 1$ on $\Omega$, $\varphi=1$ on $\{1/2<|x|<2\}$
and $\varphi=0$ outside $\{1/4<|x|<4\}$, then define
$$
\varphi_R(x):=
\begin{cases}
\varphi(Rx) & \text{on $0\leq |x|<\frac{1}{R}$}, \\
1 & \text{on $\frac{1}{R}\leq |x|<R$}, \\
\varphi(x/R) & \text{on $|x|\geq R$}.
\end{cases}
$$
Finally, let us set
$$
w^\sigma_t(x):=u^\sigma_t(x)\varphi_R(x)\in W^{1,p}_0(\Omega),
\quad
w_0(x):=u_0(x)\varphi_R(x),
\quad
u_0(x):=\Big[\frac{1}{1+|x|^{\frac{p}{p-1}}}\Big]^{\frac{N-p}{p}}.
$$
Then, we have the following
\begin{lemma}
\label{stime-unif}
For $\sigma\in\Sigma$ and $t\in [0,1)$,
$\|u^\sigma_t\|=\|u_0\|$, $\|u^\sigma_t\|_{p^*}=\|u_0\|_{p^*}$ and $\|u^\sigma_t\|^p=S\|u^\sigma_t\|_{p^*}^p$.
Furthermore, there holds
$$
\lim_{R\to\infty}\sup_{\sigma\in\Sigma, t\in [0,1)}\|w^\sigma_t-u^\sigma_t\|=0.
$$
\end{lemma}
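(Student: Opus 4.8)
The plan is to split the statement into its two parts: the scaling identities, which are elementary, and the uniform limit, which is the real content. For the first part, observe that $u_t^\sigma$ is obtained from $u_0$ by the composition of a translation $x\mapsto x-t\sigma$ and the dilation $x\mapsto \lambda^{-1}x$ with $\lambda=1-t$, together with the multiplicative rescaling $u\mapsto \lambda^{(p-N)/p}u(\cdot/\lambda)$ that leaves both $\|\nabla\cdot\|_{L^p}$ and $\|\cdot\|_{L^{p^*}}$ invariant (a direct change of variables using $p^*=Np/(N-p)$). Hence $\|u_t^\sigma\|=\|u_0\|$ and $\|u_t^\sigma\|_{p^*}=\|u_0\|_{p^*}$. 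Since $u_0$ is, up to the normalizing constant, exactly the Talenti extremal appearing in Lemma~\ref{PS}, it achieves $S$, so $\|u_0\|^p=S\|u_0\|_{p^*}^p$, and by the invariance the same holds for every $u_t^\sigma$; alternatively one checks directly that $u_t^\sigma$ solves $-\Delta_p u=u^{p^*-1}$ on $\R^N$ and invokes the identity $\|u\|^p=\|u\|_{p^*}^{p^*}$ valid for such solutions together with $\|u_0\|^p=S^{N/p}$.

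For the uniform limit, write $w_t^\sigma-u_t^\sigma=(\varphi_R-1)u_t^\sigma$ and estimate $\|\nabla\bigl((\varphi_R-1)u_t^\sigma\bigr)\|_{L^p}$. The gradient splits as $(\varphi_R-1)\nabla u_t^\sigma+u_t^\sigma\nabla\varphi_R$, so by the triangle inequality it suffices to bound, uniformly in $\sigma$ and $t$,
\begin{equation*}
I_1:=\int_{\{\varphi_R\neq 1\}}|\nabla u_t^\sigma|^p\dif x,\qquad
I_2:=\int |u_t^\sigma|^p|\nabla\varphi_R|^p\dif x,
\end{equation*}
and show both tend to $0$ as $R\to\infty$. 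The set $\{\varphi_R\neq 1\}$ is contained in $\{|x|<1/R\}\cup\{|x|>R\}$. The key point is a change of variables reducing everything to $u_0$: since $u_t^\sigma(x)=\lambda^{(p-N)/p}u_0((x-t\sigma)/\lambda)$ with $\lambda=1-t\in(0,1]$, substituting $y=(x-t\sigma)/\lambda$ turns $I_1$ into $\int_{E_{R,t,\sigma}}|\nabla u_0|^p\dif y$ over the image $E_{R,t,\sigma}$ of $\{|x|<1/R\}\cup\{|x|>R\}$, which (using $|t\sigma|=t<1$ and $\lambda\le 1$) is contained in $\{|y|<2/R\}\cup\{|y|>(R-1)/\lambda\}\supseteq$ — more carefully, one shows $E_{R,t,\sigma}\subset\{|y|\le 2/R\}\cup\{|y|\ge R-1\}$ for $R$ large, uniformly in $\sigma,t$. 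Since $|\nabla u_0|^p\in L^1(\R^N)$, by absolute continuity of the integral $\int_{\{|y|\le 2/R\}\cup\{|y|\ge R-1\}}|\nabla u_0|^p\dif y\to 0$, giving the uniform bound on $I_1$. For $I_2$, note $|\nabla\varphi_R(x)|\le CR$ on $\{|x|<1/R\}$ and $|\nabla\varphi_R(x)|\le C/R$ on $\{|x|>R\}$ (chain rule on $\varphi(Rx)$ and $\varphi(x/R)$), so after the same substitution $I_2$ is controlled by $(CR)^p\int_{\{|y|\le 2/R\}}|u_0|^p\dif y+ (C/R)^p\lambda^{-p}\int_{\{|y|\ge R-1\}}|u_0|^p\dif y$; here one must be careful that $u_0\notin L^p$ globally in general, so one estimates these two integrals explicitly from the profile $u_0(y)\asymp(1+|y|)^{(p-N)/(p-1)}$: near $0$, $\int_{|y|\le 2/R}|u_0|^p\asymp R^{-N}$, so the first term is $O(R^{p-N})\to 0$ since $p<N$; near infinity, $\int_{|y|\ge R-1}|u_0|^p\asymp R^{N-p^2/(p-1)}$ (finite and decaying precisely when $p^2/(p-1)>N$, which is the condition $p>2N/(N+2)$ rearranged — this is where the hypothesis on $p$ enters), and multiplied by $R^{-p}\lambda^{-p}$ one still needs uniformity in $\lambda\to 0$, which is the delicate point.

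The main obstacle is exactly this last uniformity as $t\to 1$, i.e. $\lambda\to 0$: the bump $\varphi(x/R)$ lives at scale $|x|\sim R$, the mass of $u_t^\sigma$ at that scale is $\lambda^{(p-N)/p}$ times $u_0$ evaluated at $|y|\sim R/\lambda$, which is enormous, and the $\lambda^{-p}$ from differentiating does not obviously help. The resolution is that one should not change variables blindly but rather estimate $\int_{|x|>R}|u_t^\sigma|^p|\nabla\varphi_R|^p\dif x$ directly in the $x$ variable: on $\{|x|>R\}$ one has $|x-t\sigma|\ge |x|-1\ge R-1$, so $u_t^\sigma(x)\le C(1-t)^{(N-p)/p}|x-t\sigma|^{(p-N)/(p-1)}\le C(1-t)^{(N-p)/p}(|x|-1)^{(p-N)/(p-1)}$; since $(1-t)^{(N-p)/p}\le 1$, the factor $\lambda$ only helps, and the integral $\int_{|x|>R}(|x|-1)^{p(p-N)/(p-1)}|\nabla\varphi_R|^p\dif x$ is then a pure function of $R$ that tends to $0$ under $p>2N/(N+2)$, uniformly in $\sigma$ and $t$. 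Symmetrically, on $\{|x|<1/R\}$ one has $u_t^\sigma(x)\le C(1-t)^{-(N-p)/p}$ and $|\nabla\varphi_R|\le CR$, so that contribution is $\le C(1-t)^{-(N-p)}R^p|B_{1/R}|=C(1-t)^{-(N-p)}R^{p-N}$ — and here the $(1-t)^{-(N-p)}$ is genuinely unbounded, so one instead keeps the change of variables for the inner region (where $E_{R,t,\sigma}\subset\{|y|\le 2/R\}$ regardless of $\lambda$) and uses the $x$-space estimate only for the outer region. Combining the $x$-space bound near infinity with the $y$-space (absolute-continuity) bound near the origin yields the uniform estimate, and letting $R\to\infty$ finishes the proof.
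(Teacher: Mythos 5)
There is a genuine gap in the treatment of the inner region, i.e.\ of everything happening on $\{|x|<1/R\}$. Your argument there rests on the claim that the image $E_{R,t,\sigma}$ of $\{|x|<1/R\}$ under $y=(x-t\sigma)/\lambda$ is contained in $\{|y|\le 2/R\}$ ``regardless of $\lambda$''. This is false: that image is the ball of radius $1/(R\lambda)$ centered at $-t\sigma/\lambda$, so for instance for $t=1/2$ it consists of points with $|y|$ close to $t/\lambda$, a quantity of order $1$, not of order $1/R$. This breaks both the absolute-continuity bound for the inner part of $I_1$ and your proposed fix for the inner part of $I_2$ (where you correctly noticed that the crude pointwise bound $u^\sigma_t\le C(1-t)^{-(N-p)(p-1)/p}$ on $B_{1/R}$ is not uniform in $t$, but then fell back on the same false inclusion). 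The correct observation — and the one the paper uses — is elementary: the concentration point $t\sigma$ never approaches the inner hole, so on all of $B_{1/2}$ one has $|u^\sigma_t|\le C$ and $|\nabla u^\sigma_t|\le C$ uniformly in $\sigma,t$. Indeed either $1-t\ge 1/4$, in which case the denominator $(1-t)^p+|x-t\sigma|^{p/(p-1)}$ is bounded below by $(1/4)^p$, or $t\ge 3/4$, in which case $|x-t\sigma|\ge 1/4$ for $|x|\le 1/2$. With these uniform pointwise bounds the inner contributions are simply $O(R^{-N})$ and $R^p\cdot O(R^{-N})=O(R^{p-N})$, and no change of variables is needed at all.

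A second, related problem is your claim that the hypothesis $p>2N/(N+2)$ ``enters here''. The lemma is stated (and true) for all $1<p<N$; no lower bound on $p$ is used. Your convergence worry at infinity disappears once you use that $\nabla\varphi_R$ is supported in $B_{4R}\setminus B_{2R}$: the outer $u\,\nabla\varphi_R$ term is $R^{-p}\int_{B_{4R}\setminus B_{2R}}|u^\sigma_t|^p\,dx\le CR^{-(N-p)/(p-1)}$ for every $1<p<N$, since the integration is over a single dyadic annulus. If you really did integrate $|u_0|^p$ over all of $\{|y|\ge R\}$, the integral converges only when $p(N-p)/(p-1)>N$, i.e.\ $p<\sqrt N$ (your exponent $R^{N-p^2/(p-1)}$ is miscomputed; it should be $R^{N-p(N-p)/(p-1)}$), and this is \emph{not} implied by $p>2N/(N+2)$ (take $p=2$, $N=4$), so that version of the argument genuinely fails. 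Finally, two minor slips: the dilation parameter relating $u^\sigma_t$ to $u_0$ is $\lambda=(1-t)^{p-1}$, not $1-t$ (harmless for the norm identities, which are correct), and your outer pointwise bound should read $u^\sigma_t(x)\le C|x|^{-(N-p)/(p-1)}$ for $|x|\ge 2$, with the prefactor $(1-t)^{(N-p)/p}\le 1$ as you say.
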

\begin{proof}
The first properties of $u^\sigma_t$ follow by \cite{talenti}. In the following $C$ will denote
a generic positive constant, independent of
$\sigma\in\Sigma$ and $t\in [0,1)$, which may vary from line to line.
We have the inequality
\begin{equation*}
\int_{\R^N}|\nabla (w^\sigma_t-u^\sigma_t)|^pdx\leq C\sum_{i=1}^4 \I_i,
\end{equation*}
where we have set
\begin{align*}
& \I_1:=\int_{\R^N\setminus B_{2R}} |\nabla u^\sigma_t|^pdx, \\
& \I_2:=\int_{B_{(2R)^{-1}}} |\nabla u^\sigma_t|^pdx, \\
& \I_3:=\frac{1}{R^p}\int_{B_{4R}\setminus B_{2R}}|u^\sigma_t|^pdx, \\
& \I_4:= R^p\int_{B_{(2R)^{-1}}}|u^\sigma_t|^p dx.
\end{align*}
Taking into account that
$$
|\nabla u^\sigma_t(x)|\leq\frac{C}{((1-t)^p+|x-t\sigma|^{\frac{p}{p-1}})^{\frac{N}{p}}}\leq C \quad\text{$|x|\leq\frac{1}{2}$},\qquad
|\nabla u^\sigma_t(x)|\leq\frac{C}{|x|^{\frac{N-1}{p-1}}}\quad\text{$|x|\geq 2$},
$$
we obtain
\begin{align*}
\I_1&=\int_{\R^N\setminus B_{2R}} |\nabla u^\sigma_t|^pdx\leq C\int_{\R^N\setminus B_{2R}}
\frac{1}{|x|^{\frac{p(N-1)}{p-1}}}dx\leq\frac{C}{R^{\frac{N-p}{p-1}}},  \\
\I_2 &=\int_{B_{(2R)^{-1}}} |\nabla u^\sigma_t|^pdx\leq
C\int_{B_{(2R)^{-1}}}dx\leq \frac{C}{R^N}.
\end{align*}
Moreover, we have
\begin{align*}
\I_3 &=  \frac{1}{R^p}\int_{B_{4R}\setminus B_{2R}}
\Big[\frac{1-t}{(1-t)^p+|x-t\sigma|^{\frac{p}{p-1}}}\Big]^{N-p}dx
 \leq \frac{C}{R^p}
\int_{B_{4R}\setminus B_{2R}}
\frac{1}{|x|^{\frac{p(N-p)}{p-1}}}dx
\leq\frac{C}{R^{\frac{N-p}{p-1}}},  \\
\I_4 &=  R^p\int_{B_{(2R)^{-1}}}
\Big[\frac{1-t}{(1-t)^p+|x-t\sigma|^{\frac{p}{p-1}}}\Big]^{N-p}dx
 \leq R^pC \int_{B_{(2R)^{-1}}}dx\leq\frac{C}{R^{N-p}}.
\end{align*}
This concludes the proof.
\end{proof}

\noindent
Let us now define
\begin{equation}
S(u):=\frac{\|\nabla u\|^p}{\|u\|_{L^{p^*}(\R^N)}^p},\qquad u\in {\mathcal D}^{1,p}(\R^N)\setminus\left\{0\right\},
\end{equation}
with the understanding that
\begin{equation}
S(u;\Omega)=\frac{\|\nabla u\|^p_{L^p(\Omega)}}{\|u\|_{L^{p^*}(\Omega)}^p},
\qquad u\in W^{1,p}_0(\Omega)\setminus\left\{0\right\},
\end{equation}
after extending by zero outside $\Omega$.

\vskip4pt
\noindent
As a consequence of Lemma \ref{stime-unif}, we have the following
\begin{lemma}
\label{estimates}
If $v^\sigma_t(x):= \|w^\sigma_t\|_{L^{p^*}(\R^N)}^{-1} w^\sigma_t(x)$
and $v_0(x)=\|w_0\|_{L^{p^*}(\R^N)}^{-1} w_0(x)$, then
$$
\lim_{R\to\infty} S(v^\sigma_t;\Omega)=S(u^\sigma_t)=S,
$$
uniformly with respect to $\sigma\in \Sigma$ and $t\in [0,1)$.
\end{lemma}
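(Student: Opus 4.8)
The plan is to reduce the claim to the uniform $W^{1,p}$-estimate already contained in Lemma~\ref{stime-unif}, together with the fact that $u^\sigma_t$ achieves the Sobolev constant. First I would observe that by construction $S(u^\sigma_t)=\|\nabla u^\sigma_t\|^p/\|u^\sigma_t\|_{p^*}^p=S$ for every $\sigma\in\Sigma$ and $t\in[0,1)$; this is precisely the last identity stated in Lemma~\ref{stime-unif}, and it is $R$-independent, so the equality $S(u^\sigma_t)=S$ in the present statement is immediate and there is nothing uniform to prove there. What remains is the limit $S(v^\sigma_t;\Omega)\to S$ as $R\to\infty$, uniformly in $(\sigma,t)$. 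Since $v^\sigma_t$ is just $w^\sigma_t$ renormalised to have unit $L^{p^*}$-norm, and $w^\sigma_t$ is supported in $\Omega$, we have $S(v^\sigma_t;\Omega)=\|\nabla w^\sigma_t\|_{L^p(\R^N)}^p/\|w^\sigma_t\|_{L^{p^*}(\R^N)}^p$, so the renormalisation constant cancels and it suffices to control the numerator and denominator of $w^\sigma_t$ against those of $u^\sigma_t$.

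Next I would handle the numerator. Writing $w^\sigma_t=u^\sigma_t+(w^\sigma_t-u^\sigma_t)$ and using the triangle inequality in $L^p$ for the gradients, $\bigl|\,\|\nabla w^\sigma_t\|_{L^p}-\|\nabla u^\sigma_t\|_{L^p}\,\bigr|\leq\|\nabla(w^\sigma_t-u^\sigma_t)\|_{L^p}=\|w^\sigma_t-u^\sigma_t\|$, and the right-hand side tends to $0$ as $R\to\infty$ uniformly in $(\sigma,t)$ by Lemma~\ref{stime-unif}. Since $\|\nabla u^\sigma_t\|_{L^p}=\|u_0\|$ is a fixed positive number (again by Lemma~\ref{stime-unif}), this gives $\|\nabla w^\sigma_t\|_{L^p}^p\to\|u_0\|^p$ uniformly. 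For the denominator the same scheme works with the $L^{p^*}$ norm: by the Sobolev inequality $\|w^\sigma_t-u^\sigma_t\|_{L^{p^*}}\leq S^{-1/p}\|w^\sigma_t-u^\sigma_t\|\to 0$ uniformly, and since $\|u^\sigma_t\|_{p^*}=\|u_0\|_{p^*}>0$ is fixed, $\|w^\sigma_t\|_{L^{p^*}}^p\to\|u_0\|_{p^*}^p$ uniformly as well. Combining the two limits and using that the common limit of the quotient is $\|u_0\|^p/\|u_0\|_{p^*}^p=S(u^\sigma_t)=S$, the ratio $S(v^\sigma_t;\Omega)$ converges to $S$ uniformly in $\sigma$ and $t$.

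The only point requiring a word of care is that the convergence of a quotient $a_R/b_R\to a/b$ is uniform provided $a_R\to a$ and $b_R\to b$ uniformly and $b$ is bounded away from $0$; here $b_R=\|w^\sigma_t\|_{p^*}^p$ and the limit $b=\|u_0\|_{p^*}^p$ is a single strictly positive constant, so for $R$ large $b_R\geq\|u_0\|_{p^*}^p/2$ uniformly in $(\sigma,t)$, and the elementary estimate $|a_R/b_R-a/b|\leq(|a_R-a|\,b+a\,|b_R-b|)/(b_R b)$ closes the argument. I do not expect any genuine obstacle: the entire content has been front-loaded into the uniform gradient estimate of Lemma~\ref{stime-unif} and the scale invariance of the Talenti functions, so this lemma is essentially bookkeeping. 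The mildly delicate ingredient, if one insists on singling one out, is making sure every estimate is genuinely independent of $\sigma$ and $t$ — but that uniformity is exactly what Lemma~\ref{stime-unif} supplies.
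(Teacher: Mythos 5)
Your proof is correct and fills in exactly the argument the paper intends: the paper states this lemma without proof, asserting only that it is "a consequence of Lemma~\ref{stime-unif}", and your deduction — scale invariance of $S(\cdot;\Omega)$, the triangle inequality for the gradient norms, the Sobolev inequality for the $L^{p^*}$ norms, and uniform convergence of the quotient since $\|u_0\|_{p^*}>0$ is a fixed constant — is the standard bookkeeping that justifies that assertion.
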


\noindent
We observe that $J$ satisfies the Palais-Smale condition between the levels $S^{N/p}/N$ and $2S^{N/p}/N$.
Therefore, as it can be readily verified, the functional $S(\cdot; \Omega)$, constrained to
$$
{\mathcal M}=\{u\in W^{1,p}_0(\Omega):\|u\|_{p^*}^{p^*}=1\},
$$
satisfies the Palais-Smale condition between $S$ and $\varpi S$, for some $\varpi>1$
depending upon $p$ and $N$.
Then, taking Lemma~\ref{estimates} into account, and assuming by contradiction that the problem does not
admit any positive solution, by arguing exactly as in \cite[pp.191-193]{struwe} one proves Theorem \ref{coronn}
by performing a well-established deformation argument on $S(\cdot;\Omega)$ as restricted to   ${\mathcal M}$,
yielding a contradiction with the geometrical properties \eqref{domain-ass} of $\Omega$.
We point out that under our assumption $2N/(N+2)<p$, it follows $p^*>2$ so that ${\mathcal M}$ is a $C^{1,1}$ smooth manifold. \qed

\section*{Acknowledgements} C.M. would like to thank Prof.\ Abbas Bahri for various discussions at Rutgers University on noncompact problems.
\vskip30pt

\bigskip
\bigskip

\end{document}